\newtheoremstyle{ptheorem}{1em}{0em}{\itshape}{}{\bfseries}{.}{.5em}{}
\theoremstyle{ptheorem}
\newtheorem{thm}{Theorem}[section]
\newtheorem{lem}[thm]{Lemma}
\theoremstyle{definition}
\newtheorem{dfn}[thm]{Definition}
\newtheorem{rem}[thm]{Remark}
\theoremstyle{remark}
\newtheorem{exa}[thm]{Example}
\begin{document}
\title{Fixed points of Hammerstein-type equations on general cones\footnote{Partially supported by  Ministerio de Econom\'ia y Competitividad (Spain) project MTM2013-43014-P and Xunta de Galicia (Spain), project EM2014/032.}}
\author{
 Rub\'en Figueroa\\
	\normalsize e-mail: ruben.figueroa@usc.es\\
	F. Adri\'an F. Tojo\footnote{Supported by  FPU scholarship, Ministerio de Educaci\'on, Cultura y Deporte, Spain.} \\
	\normalsize e-mail: fernandoadrian.fernandez@usc.es\\
	\normalsize \emph{Departamento de An\'alise Ma\-te\-m\'a\-ti\-ca, Facultade de Matem\'aticas,} \\ \normalsize\emph{Universidade de Santiago de Com\-pos\-te\-la, Spain.}\\ 
}
\date{}
\maketitle

\begin{abstract}
	{We obtain new results on the existence and multiplicity of fixed points of Hammerstein e\-qua\-tions in very general cones. In order to achieve this, we combine a new formulation of cones in terms of continuous functionals with fixed point index theory. Many examples and an application to boundary value problems are also included.}
\end{abstract}

\noindent
{\bf Key Words}: Cones; Fixed Points; Hammerstein equations.

\noindent {\bf Classification}: 37C25; 47H30; 34B15.

\bigskip

\section{Introduction}

In the last years, a vast amount of literature devoted to fixed point index theory in cones has been written. Ever since the publication of the well-known Krasnosel'ski\u{\i}'s Fixed Point Theorem~\cite{Kras60}, some authors have attempted to obtain new results in order to generalize and apply it to a large class of problems \cite{Alg,Kwo,Pet}. Probably, one of the most useful applications of Krasnosel'ski\u{\i}-type theorems is the localization of solutions of differential equations satisfying certain boundary conditions \cite{Berzig2014,CAC,Dha}. A classical approach in this direction consists in rewriting the original di\-ffe\-ren\-tial problem in terms of an operator defined in a normed space. The next step is to use some fixed-point technique to ensure that the operator has a fixed point that will correspond to a solution of the boundary value problem. \\

In the light of this background, we develop a unified framework that allows us to look for solutions in a large class of boundary value problems. As it is well-known, most of these problems can be rewritten in terms of a {\it Hammerstein}-type equation, so our goal will be to obtain new results on the existence and localization of fixed points for this equation. On this ground, we develop a new general formulation to obtain abstract Krasnosel'ski\u{\i}-type results in general cones. \\

The paper is organized as follows: in Section 2 we deal with abstract cones in normed spaces and show how these sets can be characterized in terms of continuous functionals; also, we include many examples of the application of this new perspective to some of the cones which are most often used in the literature. In Section 3 we obtain the main results of this work, which are about the existence and localization of solutions of Hammerstein-type equations in cones. Finally, in Section 4 we illustrate the theory providing an example to which we apply our results.

\section{Characterization of cones in terms of functionals}

We begin by recalling some concepts about cones in normed spaces.

\begin{dfn} Let $(N,\|\cdot\|)$ be a real normed space. A \emph{cone} in $N$ is a closed set such that
\begin{enumerate}
\item $u+v\in K \ \mbox{ for all $u,v \in K$}$;
\item $\lambda\, u\in K\ \mbox{ for all $u \in K$, $\lambda \in [0,+\infty)$}$;
\item $K\cap(-K)=\{0\}$.
\end{enumerate}
\end{dfn}
In the sequel, $(N,\|\cdot\|)$ will denote a real normed space and $$N^*:=\{\alpha :N\to\mathbb{R}\ :\ \alpha \text{ continuous}\}$$ will be the set of continuous functionals defined on $N$. Moreover, we will consider $\mathcal{A}\subset N^*$ to be the set of those $\alpha \in N^*$ which satisfy the following three conditions:
\begin{equation}
 \alpha (u+v)\ge\alpha (u)+\alpha (v) \ \mbox{ for all $u,v \in N$}; \label{a1}\end{equation}
\begin{equation} \alpha (\lambda u)\ge\lambda\alpha (u) \ \mbox{ for all $u\in N ,\ \lambda\in[0,+\infty)$};\label{a2}\end{equation}
\begin{equation} [\alpha (u)\ge 0,\alpha (-u)\ge 0]\Rightarrow u=0.\label{a3}
\end{equation}

Notice that, in general, we cannot ensure that $\mathcal{A}$ is a vector subspace of $N^*$. However, it follows from (\ref{a1})--(\ref{a3}) that if $\alpha, \beta \in \mathcal{A}$ and $\lambda \in [0,+\infty)$ then $\min\{\alpha,\beta\} \in \mathcal{A}$ and $\lambda \alpha \in \mathcal{A}$. \\

Condition (\ref{a3}) could be quite difficult to check in practice. Nevertheless, notice that a suf\-fi\-cient condition to guarantee that (\ref{a3}) is satisfied is the following:

\begin{equation}\label{a4}\alpha (u)+\alpha (-u)\le0 \ \mbox{ for all $u\in N$ and } \alpha (u)=\alpha (-u)=0 \ \mbox{ implies $u=0$.}\end{equation}

%\begin{lem}\label{lempalpha} The following properties hold:
%\begin{enumerate}
%\item $\min\{\alpha ,\b\}\in \mathcal{A};\ \alpha ,\b\in \mathcal{A}$,
%\item $\l\, \alpha \in \mathcal{A};\ \alpha \in \mathcal{A},\ \l\in[0,+\infty)$,
%\end{enumerate}
%\end{lem}
%\begin{proof} It is straightforward from properties \eqref{a1} -- \eqref{a3}.
%\end{proof}

The following Lemma will be useful in subsequent applications. In the sequel, given $\alpha  \in N^*$ we will denote $\widetilde{\alpha }(u)=\alpha (-u)$.

\begin{lem}\label{lemsc} Let $\{\alpha _j\}_{j\in J}\subset N^*$ such that $\displaystyle{\sum_{j\in J}}\alpha _j\in N^*,$ $u\in N$. Assume that:
\begin{equation}\label{a5.1}\alpha _j+\widetilde{\alpha_j}\le0 \ \mbox{ for all } j\in J,\end{equation}
and
\begin{equation}\label{a5.2}\bigcap_{j \in J}(\alpha _j+\widetilde{\alpha _j})^{-1}(\{0\})=\{0\}.\end{equation}
Then $\displaystyle{\sum_{j\in J}\alpha _j}$ satisfies condition \eqref{a4}.
\end{lem}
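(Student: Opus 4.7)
The plan is to verify the two parts of condition (\ref{a4}) directly for the functional $\alpha := \sum_{j\in J}\alpha_j$. The key observation is that the operation $\alpha \mapsto \widetilde{\alpha}$ is linear in the obvious sense: from the definition $\widetilde{\alpha}(u) = \alpha(-u)$ one gets
\[
\widetilde{\alpha}(u) \;=\; \alpha(-u) \;=\; \sum_{j\in J}\alpha_j(-u) \;=\; \sum_{j\in J}\widetilde{\alpha_j}(u),
\]
and therefore
\[
\alpha(u) + \widetilde{\alpha}(u) \;=\; \sum_{j\in J}\bigl(\alpha_j(u) + \widetilde{\alpha_j}(u)\bigr)
\]
for every $u\in N$. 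The convergence of this series is inherited from the convergence of $\sum_j \alpha_j$ and $\sum_j \widetilde{\alpha_j}$ in $N^*$, so the identity makes sense termwise.

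The first half of (\ref{a4}) then follows at once: by hypothesis (\ref{a5.1}) each summand $\alpha_j(u)+\widetilde{\alpha_j}(u)$ is non-positive, so the total sum is non-positive, i.e.\ $\alpha(u)+\widetilde{\alpha}(u)\le 0$ for all $u\in N$.

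For the implication in (\ref{a4}), suppose $\alpha(u)=\widetilde{\alpha}(u)=0$. Then $\alpha(u)+\widetilde{\alpha}(u)=0$, so the series
\[
\sum_{j\in J}\bigl(\alpha_j(u)+\widetilde{\alpha_j}(u)\bigr) \;=\; 0
\]
is a convergent sum of real numbers, each of which is $\le 0$ by (\ref{a5.1}). Consequently every term must be zero: $\alpha_j(u)+\widetilde{\alpha_j}(u)=0$ for every $j\in J$. This says precisely that $u\in\bigcap_{j\in J}(\alpha_j+\widetilde{\alpha_j})^{-1}(\{0\})$, so (\ref{a5.2}) yields $u=0$, as required.

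The only mildly delicate point is the step in which I conclude that each non-positive term in a convergent series summing to $0$ must itself be $0$; this is elementary (any finite partial sum is sandwiched between $0$ and the total sum), but it is worth flagging because it is where the hypothesis that $\sum_j\alpha_j$ actually belongs to $N^*$ (hence the series is a genuine convergent sum in $\mathbb{R}$) is being used, rather than being merely a formal manipulation.
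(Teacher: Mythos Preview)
Your proof is correct and follows essentially the same route as the paper's own argument: both verify the first inequality of \eqref{a4} by summing the termwise inequalities from \eqref{a5.1}, and then argue that if $\sum_{j}\alpha_j(u)=\sum_{j}\widetilde{\alpha_j}(u)=0$, the non-positive terms $\alpha_j(u)+\widetilde{\alpha_j}(u)$ must all vanish, placing $u$ in the intersection \eqref{a5.2}. Your additional remarks on convergence and on why a null sum of non-positive terms forces each term to be zero simply make explicit what the paper leaves implicit.
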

\begin{proof} By condition \eqref{a5.1}, we have that $\displaystyle{\sum_{j\in J}\alpha _j+\sum_{j\in J}\widetilde{\alpha _j}\le0}$. Now, let $u\in N$ be an element satisfying $\displaystyle{\sum_{j\in J}\alpha _j(u)=\sum_{j\in J}\alpha _j(-u)=0.}$ This implies that $\displaystyle{\sum_{j\in J}[\alpha _j(u)+\widetilde{\alpha _j}(u)]=0}$ and thus, because of \eqref{a5.1}, $\alpha _j(u)+\widetilde{\alpha _j}(u)=0$ for every $j\in J$. Hence we have $u\in(\alpha _j+\widetilde{\alpha _j})^{-1}(\{0\})$ for every $j\in J$. Therefore,\[u\in\stackbin[j\in J]{}{\bigcap}(\alpha _j+\widetilde{\alpha _j})^{-1}(\{0\}),\] and, by virtue of \eqref{a5.2}, condition \eqref{a4} is satisfied.
\end{proof}

Now we introduce the main result of this section, which characterizes all cones in $N$ in terms of suitable functionals. For this purpose, we denote by $\mathcal{K}$ be the set of all cones in $N$ and, given $\alpha  \in \mathcal{A}$, we define $K_\alpha :=\{u\in N\ :\ \alpha (u)\ge0\}$.
\begin{rem} With the notation introduced above, it is clear that $$K_\alpha \cap K_\beta=K_{\min\{\alpha ,\beta\}}$$ for $\alpha ,\beta\in\mathcal{A}$. In the same way, $\stackbin[\alpha \in\mathcal{A}']{}{\bigcap}K_\alpha =K_{\inf\limits_{\alpha \in\mathcal{A}'}\alpha }$ for every $\mathcal{A}'\subset\mathcal{A}$. In \cite{gijwems} we can see a cone constructed in this way.
\end{rem}
\begin{thm}$\mathcal{K}=\{K_\alpha  \, : \, \alpha \in\mathcal{A}\}$.
\end{thm}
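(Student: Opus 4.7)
My plan is to prove the equality by double inclusion, since one direction is essentially a routine verification and the other requires producing a single well-chosen functional.

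For the inclusion $\{K_\alpha : \alpha\in\mathcal{A}\}\subseteq\mathcal{K}$, I would fix $\alpha\in\mathcal{A}$ and check the three cone axioms for $K_\alpha=\{u\in N:\alpha(u)\ge0\}$. Closedness is immediate because $K_\alpha=\alpha^{-1}([0,+\infty))$ and $\alpha$ is continuous. Closure under addition follows at once from \eqref{a1}: if $\alpha(u),\alpha(v)\ge0$ then $\alpha(u+v)\ge\alpha(u)+\alpha(v)\ge0$. Closure under nonnegative scalar multiplication follows from \eqref{a2}. Finally, $K_\alpha\cap(-K_\alpha)=\{0\}$ is precisely the content of \eqref{a3}. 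So every $K_\alpha$ is a cone.

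For the nontrivial inclusion $\mathcal{K}\subseteq\{K_\alpha:\alpha\in\mathcal{A}\}$, I would take a cone $K$ and propose the explicit candidate
\[
\alpha(u):=-d(u,K),\qquad d(u,K):=\inf_{k\in K}\|u-k\|.
\]
Continuity of $\alpha$ is standard (distance functions are $1$-Lipschitz). To verify \eqref{a1} I would use that $K$ is closed under addition: given $k_1,k_2\in K$, $k_1+k_2\in K$, and
\[
d(u+v,K)\le\|u+v-(k_1+k_2)\|\le\|u-k_1\|+\|v-k_2\|,
\]
so taking infima yields $d(u+v,K)\le d(u,K)+d(v,K)$, which is exactly $\alpha(u+v)\ge\alpha(u)+\alpha(v)$. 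Condition \eqref{a2} uses positive homogeneity of $K$: for $\lambda>0$ and $k\in K$, $\lambda k\in K$, so $d(\lambda u,K)\le\|\lambda u-\lambda k\|=\lambda\|u-k\|$, giving $d(\lambda u,K)\le\lambda d(u,K)$, and the case $\lambda=0$ is trivial since $0\in K$. For \eqref{a3}, if $\alpha(u)\ge0$ and $\alpha(-u)\ge0$, then $d(u,K)=d(-u,K)=0$, hence $u\in\overline{K}=K$ and $-u\in K$, so $u\in K\cap(-K)=\{0\}$. Finally, $K_\alpha=\{u:d(u,K)=0\}=\overline{K}=K$, which closes the argument.

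The main obstacle is only the choice of representing functional in the second inclusion; once $\alpha(u)=-d(u,K)$ is on the table, everything reduces to the triangle inequality plus the algebraic cone axioms. I would also remark in passing that the construction shows the representing $\alpha$ is in fact $1$-Lipschitz and non-positive on all of $N$, which will be useful for later checks through the sufficient condition \eqref{a4} and Lemma~\ref{lemsc}.
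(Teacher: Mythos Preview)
Your proposal is correct and follows essentially the same route as the paper: both inclusions are argued as you describe, and for the nontrivial direction you use the very same functional $\alpha(u)=-d(u,K)$ that the paper chooses. Your verification of \eqref{a1} via $k_1+k_2\in K$ and two independent infima is in fact a bit cleaner than the paper's version (which passes through $-\inf_{w\in K}(\|u-w\|+\|v-w\|)$ using $2K=K$), and your check of \eqref{a3} via $u,-u\in\overline{K}=K$ is equivalent to the paper's observation that $u\in K\setminus\{0\}$ forces $-u\notin K$.
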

\begin{proof}
To see that $\mathcal{K}\supset\{K_\alpha ,\ \alpha \in\mathcal{A}\}$ we only have to notice that for every $\alpha \in\mathcal{A}$, $K_\alpha $ is a cone by properties $\eqref{a1}-\eqref{a3}$.\par
Now we show that $\mathcal{K}\subset\{K_\alpha ,\ \alpha \in\mathcal{A}\}$. Let $K\in\mathcal{K}$ and define \[\alpha (u):=-\inf_{w\in K}\|u-w\|.\] Thus defined, $\alpha $ is a continuous functional ($\alpha (u)$ is actually minus the distance from $u$ to the cone $K$). Clearly, $K_\alpha =K$. Furthermore, for $u,v\in N$ and $\lambda\in[0,+\infty)$, we have that
\begin{align*}\alpha (u+v) & =-\inf_{w\in K}\|u+v-w\|=-\inf_{w\in K}\|u+v-2w\| \\ & \ge-\inf_{w\in K}(\|u-w\|+\|v-w\|)=\alpha (u)+\alpha (v),\\
\alpha (\lambda\,u) & =-\inf_{w\in K}\|\lambda\,u-w\|=-\inf_{w\in K}\|\lambda\,u-\lambda\,w\|=-\inf_{w\in K}\lambda\|u-w\|=\lambda\alpha (u).\end{align*}
Finally, if $u\in K\backslash\{0\}$, $-u\not\in K$ and so $\alpha (u)<0$. Therefore $\alpha \in\mathcal{A}$.
\end{proof}
\begin{rem}In the previous result we have proved something even stronger: we can take $\alpha $ to satisfy $\alpha (\lambda u)=\lambda\alpha (u)$ and $\alpha (u)=0$ for every $u\in K$, $\lambda\in[0,+\infty)$.
\end{rem}
This last result shows that any cone on a normed space is given by a functional satisfying pro\-per\-ties $\eqref{a1}-\eqref{a3}$. Now, we may wonder under which circumstances two different functionals define the same cone. In order to elucidate this, given a cone $K$ in $N$, define the functional $\varphi_K(u):=d(u,\partial K)-2d(u,K)$ where $\partial K$ is the boundary of $K$ and $d(u,X):=\inf_{w\in X}\|u-w\|$ is the distance from $u$ to the set $X\subset N$. The way it is defined, $\varphi_K$ is clearly continuous. Actually, we have that
\[\varphi_K(u)=\begin{dcases}-d(u,K)<0, &  u\in N\backslash K, \\ d(u,\partial K)>0, & u\in\operatorname{Int}(K),\\ 0, & u\in\partial K.
\end{dcases}\]
With this, it is easy to prove the following Lemma.
\begin{lem}Let $\alpha ,\beta\in\mathcal{A}$. Then $K_\alpha =K_\beta$ if and only if $\beta=\xi\,\varphi_{K_\alpha }$ for some $\xi:N\to[0,+\infty)$ such that $\xi(u)>0$ for $u\in N\backslash K_\alpha $.
\end{lem}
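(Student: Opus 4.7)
The plan is to exploit the sign characterization of $\varphi_K$ recorded in the excerpt—$\varphi_K(u)<0$ off $K$, $\varphi_K(u)>0$ on $\operatorname{Int}(K)$, and $\varphi_K(u)=0$ exactly on $\partial K$—and to read off $\xi$ pointwise from $\beta$ and $\varphi_{K_\alpha}$.

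For the ``if'' direction, I would suppose $\beta=\xi\,\varphi_{K_\alpha}$ with $\xi\ge 0$ on $N$ and $\xi>0$ on $N\setminus K_\alpha$. If $u\in K_\alpha$ then $\varphi_{K_\alpha}(u)\ge 0$, so $\beta(u)\ge 0$ and $u\in K_\beta$; if $u\notin K_\alpha$ then $\varphi_{K_\alpha}(u)<0$ and $\xi(u)>0$, hence $\beta(u)<0$ and $u\notin K_\beta$. This yields $K_\alpha=K_\beta$ immediately.

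For the ``only if'' direction, assume $K_\alpha=K_\beta$. The key preliminary observation is that $\beta$ vanishes on $\partial K_\alpha=\partial K_\beta$: if $u\in\partial K_\beta\subset K_\beta$ then $\beta(u)\ge 0$, while $u$ is also a limit of points $u_n\in N\setminus K_\beta$ (on which $\beta<0$), so continuity of $\beta$ forces $\beta(u)\le 0$. With this in hand, I would define
\[
\xi(u):=
\begin{cases}
\beta(u)/\varphi_{K_\alpha}(u), & u\notin\partial K_\alpha,\\
0, & u\in\partial K_\alpha,
\end{cases}
\]
so that $\beta=\xi\,\varphi_{K_\alpha}$ holds trivially on $\partial K_\alpha$ (both sides are $0$) and by construction elsewhere. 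Nonnegativity of $\xi$, together with strict positivity off $K_\alpha$, then follows from matching signs: on $\operatorname{Int}(K_\alpha)$ one has $\beta\ge 0$ and $\varphi_{K_\alpha}>0$, while outside $K_\alpha$ both $\beta$ and $\varphi_{K_\alpha}$ are strictly negative.

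The main obstacle is the boundary-vanishing step for $\beta$; once it is established, the construction of $\xi$ is a purely pointwise matter. I would also note that the statement does not require $\xi$ to be continuous, so no further regularity check is needed, and that the degenerate case $K_\alpha=\{0\}$ presents no difficulty (its boundary is $\{0\}$, where $\beta$ vanishes by the same argument), whereas $K_\alpha=N$ is excluded by the cone axiom $K\cap(-K)=\{0\}$ as soon as $N\neq\{0\}$.
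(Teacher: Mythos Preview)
Your proof is correct and follows essentially the same approach as the paper's: define $\xi=\beta/\varphi_{K_\alpha}$ off $\partial K_\alpha$ and $\xi=0$ on $\partial K_\alpha$, and justify the boundary case by showing $\beta$ vanishes there via continuity and the sign pattern of $\beta$ on $K_\alpha$ versus $N\setminus K_\alpha$. The paper argues the boundary vanishing slightly more tersely (simply invoking continuity together with $\beta\ge 0$ on $K_\alpha$ and $\beta<0$ off it), but the content is the same.
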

\begin{proof} Assume $K_\alpha =K_\beta$ and define $\xi=\beta/\varphi_{K_\alpha }$ in $N\backslash \partial K_\alpha $ and $\xi=0$ in $\partial K_\alpha $. $\xi$ is well defined since $\varphi_{K_\alpha }\ne0$ in $N\backslash \partial K_\alpha $. Clearly, $\beta=\xi\varphi_{K_\alpha }$ in $N\backslash \partial K_\alpha $. Also, since $\beta$ is continuous, $\beta\ge 0$ in $K_\alpha $ and $\beta<0$ in $N\backslash K_\alpha $, we have that $\beta=0$ in $\partial K_\alpha $, so $\beta=\xi\varphi_{K_\alpha }$ in $N$.\par
Assume now $\beta=\xi\varphi_{K_\alpha }$. Then $\xi\varphi_{K_\alpha }\ge0$ in $K_\alpha $ and so $K_\alpha \subset K_\beta$. On the other hand, if $u\in N\backslash K_\alpha $ then $\xi(u)>0$ and $\varphi_{K_\alpha }<0$, so $\xi\varphi_{K_\alpha }<0$ and $u\in N\backslash K_\beta$. Hence, $K_\alpha =K_\beta$.
\end{proof}

Now we show some examples of functionals satisfying \eqref{a1} -- \eqref{a3}. As we will see, these functionals will be related to some cones which frequently appear in the literature. \\

In the following, consider the interval $I=[0,1]$ and the Banach space of continuous functions with the maximum norm $(\mathcal{C}(I),\, \|\cdot\|)$.

\begin{exa} Let $\|\cdot\|_*$ be a continuous norm (possibly different from the maximum norm $\|\cdot\|$) in $\mathcal{C}(I)$, $K$ be a cone in $\mathcal{C}(I)$ and $\sigma\in\mathcal{C}(I)$ a positive function. The functional
\[\alpha (u):=-\inf_{v\in K}\|\sigma\,u-v\|_*\]
satisfies properties \eqref{a1} and \eqref{a2}. If $\alpha (u)\ge0$ and $\alpha (-u)\ge0$, then $\alpha (u)=\alpha (-u)=0$. Then, being $\alpha $ continuous and $K$ closed,   there exist $v,w\in K$ such that $\|\sigma\,u-v\|_*=\|-\sigma\,u-w\|_*=0$, so $\sigma\,u=v=-w$. Since $K$ is a cone, $K\cap(-K)=\{0\}$, and hence $\sigma\,u=v=-w=0$. $\sigma$ is positive, which implies $u=0$. \par
From the above discussion we deduce that $K_\alpha =\{u/\sigma\ :\ u\in K\}$.\
For the particular choices $\sigma=1$, $K=\{0\}$ and $\|\cdot\|_*=c\|\cdot\|$, $c\in(0,+\infty)$, we have $\alpha (u)=-c\|u\|$.\par
Also, we can choose $\|\cdot\|_*=\|\cdot\|_p$, the $p$ norm of $\operatorname{L^p}(I)$, to have $\alpha (u)=-\|\cdot\|_p$.
\end{exa}

\begin{exa} For $u\in\mathcal{C}(I)$, let $\max u$ ($\min u$) be the maximum (minimum) of $u$ on its domain.
It is clear that the functionals $\min u$, $-\max u$, $-\|u\|$, satisfy conditions \eqref{a1}, \eqref{a2} and \eqref{a4}. In fact, for a function $\sigma\ge0$ we can generalize this to the functionals $\min (\sigma\, u)$, $-\max (\sigma\, u)$, $-\|\sigma\,u\|$, which satisfy properties \eqref{a1} and \eqref{a2}. If we define $\chi_{[a,b]}$ to be the characteristic function of the interval $[a,b]\subset I$ and take $c\in(0,+\infty)$, we can combine the above functionals using Lemma \ref{lemsc} to derive the  functional
\[\alpha (u)=\min(\chi_{[a,b]}\, u)-c\|u\|,\]
which satisfies conditions  \eqref{a1}, \eqref{a2} and \eqref{a4}.
Observe that, for $u\in K_\alpha $, \[\alpha (u)=\min_{t\in[a,b]}u(t)-c\|u\|,\]  and in this form it is used in \cite{jw-tmna,Franco,gippmt,Wan,Goodrich4,Goodrich5,Goodrich3,IPZ,jwkleig,Don,Wang1,jwmz-na,Li-Cong,sun2,jw-gi-jlmsII,gijwems,ma-non,gi-pp1,gi-pp,Sun3,WIF,jw-gi-jlms,webb-semi,IP-disp,Sun,gijwjiea,CITZ,WebbUni}.
\par
We can derive, in the same way, the more general functional \[\alpha (u)=\min(\chi_{[a,b]}\,\sigma\, u)-\|u\|,\] where $\sigma \in\mathcal{C}(I)$, $\sigma >0$, which appears in the cones of the form
\[K=\{u\in\mathcal{C}(I)\ :\ u(t)\ge \sigma(t)\|u\|,\ t\in I\},\]
used in \cite{jw-tmna,Erbe,Anu,Lan2}.
\end{exa}
\begin{exa} Let $t_0\in I$, $a,b\in\mathcal{C}(I)$, $a,b\ge0$, $a+b>0$. The functional \[\alpha (u)=\min(a\,u)-\max(b\,u)-|u(t_0)|,\] satisfies  properties \eqref{a1} and \eqref{a2}. Also,
\[\alpha (u)+\widetilde{\alpha }(u)=\min(a\,u)-\max(a\,u)+\min(b\,u)-\max(b\,u)-2|u(t_0)|\le 0,\]
and $\alpha (u)+\widetilde{\alpha }(u)=0$ only for $u=0$, so condition \eqref{a4} is also satisfied.
\end{exa}

\begin{exa} Consider \[\nparallel u\nparallel:=\max\left\{\min u,-\max u\right\}.\]
For every $u,v\in\mathcal{C}(I)$ and $\lambda\in\mathbb{R}$, the functional $\nparallel \cdot\nparallel$ satisfies the following conditions:
\begin{itemize}
\item $\nparallel u+v\nparallel\ge \nparallel u\nparallel+\nparallel v\nparallel$,
\item $\nparallel \lambda u\nparallel=|\lambda|\nparallel  u\nparallel$,
\item $\|u\|-\nparallel u\nparallel=\max u-\min u\ge0$,
\item $\nparallel u\nparallel-\min u=\|u\|-\max u\ge0$,
\item $\max\{\max u,\nparallel u\nparallel\}=|\max u|$.
\end{itemize}
Consider then the functional \[\alpha (u)=\nparallel a\,u\nparallel-\|b\,u\|,\]
where $a,b\in\mathcal{C}(I)$ are such that $|a|\le|b|$ and $|a|+|b|>0$. $\alpha $ satisfies  conditions \eqref{a1} and \eqref{a2}. Then,
\[\alpha (u)=\|a\,u\|-\|b\,u\|+\min(a\,u)-\max(a\,u)\le 0,\]
In fact, if $\alpha (u)=0$, then $\|a\,u\|-\|b\,u\|=0$ and $\min(a\,u)-\max(a\,u)=0$. Therefore $a\,u=b\,u=0$. Since $|a|+|b|>0$ we obtain $u=0$ and so condition \eqref{a4} is satisfied.
\end{exa}
\begin{exa} Let $S\subset \mathcal{C}(I)$ be a bounded set such that for every $t\in I$ there exists $\sigma\in S$ satisfying $\sigma(t)\ne 0$ in an open neighborhood of $t$. Also, assume $\stackbin[\sigma\in S]{}{\bigcup}\sigma(I)$ has at least two elements. Define
\[\alpha (u)=\inf_{\sigma\in S}\min(\sigma\,u).\]
Thus defined, $\alpha $ satisfies  conditions \eqref{a1} and \eqref{a2} and also
\[\alpha (u)+\widetilde{\alpha }(u)=\inf_{\sigma\in S}\min(\sigma\,u)-\sup_{\sigma\in S}\max(\sigma\,u)\le \inf_{\sigma\in S}\left(\min(\sigma\,u)-\max(\sigma\,u)\right)\le0.\]
Now, assume $\alpha (u)+\widetilde{\alpha }(u)=0$. This implies $\min(\sigma\,u)=\max(\sigma\,u)$ for every $\sigma\in S$, so $\sigma u$ is constant for every $\sigma\in S$. Furthermore, for every $t$ there exists $\sigma\in S$ such that $\sigma(t)\ne 0$ in an open neighborhood of $t$, so $u$ is constant in an open neighborhood of $t$. Consider the set $A=u^{-1}(\{u(0)\})$. Being the inverse image by a continuous function of a closed set, $A$ is closed in $I$. On the other hand, $A$ is open in $I$, since for every $t\in A$ there is a neighborhood $U$ of $t$ such that $u$ is constant in $U$. Then $u(t)=u(t_0)$ for all $t \in U\subset A$. As $A$ is both closed and open in $I$, $A=I$, so $u$ is constant in all of $I$. Hence, \[\alpha (u)+\widetilde{\alpha }(u)=u(0)\left[\inf_{\sigma\in S}\min\sigma-\sup_{\sigma\in S}\max\sigma\right].\]\par
Now, there exist $\sigma_1,\sigma_2\in S$ and $t_1,t_2\in I$ such that $\sigma_1(t_1)>\sigma_2(t_2)$. Hence, we obtain $\inf_{\sigma\in S}\min\sigma-\sup_{\sigma\in S}\max\sigma<0$ and therefore $u=0$.
\end{exa}
\begin{exa}Consider now a function $h:I\times\mathbb{R}\to\mathbb{R}$ such that for $u \in \mathcal{C}(I)$ the composition $t \in I \longmapsto h(t,u(t))$ is integrable and which moreover satisfies that $h(t,x+y)\ge h(t,x)+h(t,y)$ and $h(t,\lambda\, x)\ge\lambda h(t,x)$ for $x\in\mathbb{R}$, $t\in I$ and $\lambda\ge0$ (we could consider, for instance, the function $h(t,x)=e^t\chi_{[0,+\infty)}(t)x$ where $\chi$ is the characteristic function). Consider also a positive measure given by a function of bounded variation $A$ and the functional given by the Stieltjes integral
\[\alpha (u)=\int_0^1h(t,u(t))\operatorname{d} A(t).\]
%Thus defined, $\alpha $ satisfies conditions \eqref{a1} and \eqref{a2}.\par
%If we further ask for $h(t,x)\le -h(t,-x)$ for every $x\in\mathbb{R}$ and $t\in I$, then we have that $\alpha (u)+\widetilde{\alpha }(u)\le0$ for all $u \in \mathcal{C}(I)$. If we also ask that there exists $t\in I$ with $h(t,x)< -h(t,-x)$ in a neighborhood of $t$ for every $x\in\mathbb{R}$, then $\alpha (u)+\widetilde{\alpha }(u)<0$ for all $u \in \mathcal{C}(I)$, and so condition \eqref{a4} is satisfied.
\par
Cones defined by functionals involving integrals can be found in a number of works, for instance \cite{ma-non,WZFP,ZA2011}, and functionals given by a measure of bounded variation in \cite{gi-pp-ft,gippmt,Goodrich8,Jan4,jw-gi-jlms,jwmz-na,webb-semi,WIF}.
\end{exa}
\begin{exa}\label{conex}
The set of continuous concave functions is given by $$C=\{u\in\mathcal{C}(I)\ :\ \alpha (u)\ge0\},$$ where
\[\alpha (u):=\inf_{t,s\in I}\left[u\left(\frac{t+s}{2}\right)-\frac{u(t)+u(s)}{2}\right]\footnote{Actually, this functional provides the set of mid-point concave continuous functions. A theorem of Sierpi\'nski (see \cite{Donoghue}) shows that in the case of measurable functions, mid-point concave and concave functions coincide.}.\]
The functional $\alpha $ satisfies conditions \eqref{a1} and \eqref{a2}. If $u\in\mathcal{C}(I)$ and $\alpha (u),\alpha (-u)=0$ then we have what is called Jensen's functional equation:
\[u\left(\frac{t+s}{2}\right)=\frac{u(t)+u(s)}{2} \mbox{ for all } t,s\in I.\]
To solve it, just define $I_n:=[0,2^{-n}]$, $n=0,1,\dots$, and observe that, for $t\in I_{n}$ and $s=2^{-n}-t$ we have that
\[u\left(2^{-n-1}\right)=\frac{u(t)+u(2^{-n}-t)}{2};\ t,s\in I.\]
That is, $u$ is symmetric with respect to $2^{-n-1}$ in the interval $I_n$, which means that $u(I_{n+1})=u(I_n)$ for every $n=0,1,\dots$ or, equivalently, $u(I)=\stackrel[k=0]{\infty}{\bigcap} u(I_n)$ for every $n=0,1,\dots$ If $y\in\stackrel[k=0]{\infty}{\bigcap} u(I_n)$ for every $n=0,1,\dots,$ there exists $x_n\in I_n$ such that $u(x_n)=y$. Since $0\le x_n\le 2^{-n}$, we have that $x_n\to 0$ and, since $u$ is continuous, $u(x_n)\to u(0)$. Therefore, $y=u(0)$ and so $u$ is a constant. Reciprocally, every constant satisfies Jensen's equation and, in conclusion, $C$ is not a cone.\par
Now, if we consider $\eta\in I$ and define the closed vector subspace $$N_\eta:=\{u\in\mathcal{C}(I)\ :\ u(\eta)=0\},$$ we have that $\mathcal{C}(I)=N_\eta\oplus\mathbb{R}$ and in this case $C\bigcap N_\eta$ is a cone.\par
Cones in which concave functions are involved appear, for instance, in \cite{AP2001,JLY2012}.

\end{exa}
\section{Fixed point results for Hammerstein equations}

In this section we obtain some results regarding the existence of solutions of integral equations of Hammerstein-type in abstract cones. To do this, we will work in cones characterized by functionals satisfying (\ref{a1})--(\ref{a3}). Consider again the interval $I:=[0,1]$ and the Banach space of continuous functions with the maximum norm $(\mathcal{C}(I),\, \|\cdot\|)$. Given a functional $\alpha :\mathcal{C}(I)\to\mathbb{R}$, $\alpha \in\mathcal{A}$, we look for fixed points in $K_{\alpha }$ of an operator $T:\mathcal{C}(I)\to\mathcal{C}(I)$ given by\begin{equation}\label{eqhamm}
Tu(t):=\int_{0}^{1} k(t,s)g(s)f(s,u(s))\operatorname{d} s.
\end{equation}

An equation of the form (\ref{eqhamm}) is usually known as a \emph{Hammerstein}-type equation, and there are many papers in the literature which deal with this type of equations, see for instance \cite{CCI,Cai,Chi}. Typically, as we said in Section 1, these equations appear when looking for solutions of certain type of boundary value problems. In this context, the kernel $k$ uses to be the Green's function of a related problem and $g$ and $f$ are, respectively, the linear and the nonlinear part of the differential equation in that problem. In this context, our work provides a new point of view from which all these problems can be considered in a unified framework. \\

The way we look for solutions of equation (\ref{eqhamm}) is the well-known technique in fixed point index theory. For the sake of completeness, we recall now a classical result for continuous compact maps (cf. \cite{amann} or \cite{guolak}).

Let $K$ be a cone in $\mathcal{C}(I)$. If $\Omega$ is a bounded open subset of $K$ (in the relative
topology) we now denote by $\overline{\Omega}$ and $\partial \Omega$ respectively
its closure and its boundary. Moreover, we will denote $\Omega_K=\Omega \cap K$, which is an open subset of
$K$.

\begin{lem}\label{lemind}
Let $\Omega$ be an open bounded set with $0\in \Omega_{K}$ and $\overline{\Omega_{K}}\ne K$. Assume that $F:\overline{\Omega_{K}}\to K$ is
a continuous compact map such that $x\neq Fx$ for all $x\in \partial \Omega_{K}$. Then the fixed point index\index{fixed point index} $i_{K}(F, \Omega_{K})$ has the following properties.
\begin{itemize}
\item[(1)] If there exists $e\in K\backslash \{0\}$ such that $x\neq Fx+\lambda e$ for all $x\in \partial \Omega_K$ and all $\lambda
>0$, then $i_{K}(F, \Omega_{K})=0$.
\item[(2)] If  $\mu x \neq Fx$ for all $x\in \partial \Omega_K$ and for every $\mu \geq 1$, then $i_{K}(F, \Omega_{K})=1$.
\item[(3)] If $i_K(F,\Omega_K)\ne0$, then $F$ has a fixed point in $\Omega_K$.
\item[(4)] Let $\Omega^{1}$ be open in $X$ with $\overline{\Omega^{1}}\subset \Omega_K$. If $i_{K}(F, \Omega_{K})=1$ and
$i_{K}(F, \Omega_{K}^{1})=0$, then $F$ has a fixed point in $\Omega_{K}\backslash \overline{\Omega_{K}^{1}}$. The same result holds if
$i_{K}(F, \Omega_{K})=0$ and $i_{K}(F, \Omega_{K}^{1})=1$.
\end{itemize}
\end{lem}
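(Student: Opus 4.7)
The plan is to derive all four properties from the standard axiomatic construction of the fixed point index for compact maps on cones (as developed in \cite{amann,guolak}), using the three basic pillars: \emph{normalization}, \emph{homotopy invariance} and \emph{additivity/excision}. Since the statement is classical, the goal of the proof is only to verify that each item fits one of these axioms via an appropriate admissible homotopy.

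For property (2), I would consider the compact homotopy $H:\overline{\Omega_K}\times[0,1]\to K$ given by $H(x,t)=tFx$. For $t\in(0,1]$, a fixed point on $\partial\Omega_K$ would mean $x=tFx$, equivalently $(1/t)x=Fx$ with $\mu:=1/t\ge1$, which is excluded by hypothesis; at $t=0$ we have $H(\cdot,0)\equiv 0$, whose only fixed point is $0\in\Omega_K$, hence not on $\partial\Omega_K$. Homotopy invariance then gives $i_K(F,\Omega_K)=i_K(0,\Omega_K)$, and the normalization axiom (the constant map to an interior point of the cone has index $1$) yields $i_K(F,\Omega_K)=1$. For property (1), I would pick $e\in K\setminus\{0\}$ and use the homotopy $H(x,\lambda)=Fx+\lambda e$ for $\lambda\in[0,R]$; admissibility on $\partial\Omega_K$ is precisely the standing assumption $x\ne Fx+\lambda e$. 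Since $F(\overline{\Omega_K})$ is relatively compact and $\Omega_K$ is bounded, one can choose $R$ so large that $\|Fx+Re\|$ exceeds $\sup_{y\in\overline{\Omega_K}}\|y\|$ for all $x\in\overline{\Omega_K}$, so $H(\cdot,R)$ has no fixed point in $\overline{\Omega_K}$ and thus $i_K(H(\cdot,R),\Omega_K)=0$; homotopy invariance yields $i_K(F,\Omega_K)=0$.

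Property (3) is the \emph{solution property} of the index, a direct consequence of normalization together with the fact that if $F$ has no fixed point in $\overline{\Omega_K}$ then $i_K(F,\Omega_K)=0$ (obtained by excising the whole set). Property (4) follows from additivity: the set $\Omega_K\setminus\overline{\Omega_K^1}$ is open in $K$, admissibility holds there since $F$ has no fixed points on $\partial\Omega_K\cup\partial\Omega_K^1$, and additivity gives
\[i_K(F,\Omega_K)=i_K(F,\Omega_K^1)+i_K(F,\Omega_K\setminus\overline{\Omega_K^1}).\]
Under either hypothesis the right-hand summand equals $\pm1\ne 0$, so (3) supplies a fixed point in $\Omega_K\setminus\overline{\Omega_K^1}$.

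The main technical obstacle, and the only place where one really needs to do anything beyond quoting an axiom, is verifying admissibility of the homotopies on the relative boundary $\partial\Omega_K$ inside the cone (rather than on the boundary in the ambient space $\mathcal{C}(I)$). Once this relative formulation is fixed — so that $0\in\Omega_K$ is an \emph{interior} point of $\Omega_K$ in the relative topology of $K$ and $\partial\Omega_K$ is the relative boundary — the classical machinery applies verbatim, and the four items above follow routinely.
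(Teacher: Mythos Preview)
Your proof sketch is correct and follows the standard derivation of these properties from the axioms of the fixed point index (normalization, homotopy invariance, additivity); the only minor point worth tightening is in item~(1), where the claim that $\|Fx+Re\|$ eventually exceeds $\sup_{y\in\overline{\Omega_K}}\|y\|$ should be justified via the reverse triangle inequality $\|Fx+Re\|\ge R\|e\|-\|Fx\|$ together with the boundedness of $F(\overline{\Omega_K})$.

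However, there is nothing to compare with: the paper does \emph{not} prove this lemma. It is stated purely as a recalled classical result, with the sentence ``For the sake of completeness, we recall now a classical result for continuous compact maps (cf.~\cite{amann} or \cite{guolak})'' preceding it, and the proof is deferred entirely to those references. Your write-up therefore supplies strictly more than the paper does; if anything, it could serve as the missing justification that the authors chose to omit.
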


Now we state the main results of this paper. In order to do so, we consider the following list of assumptions for equation \eqref{eqhamm} and the cone $K_\alpha $ with $\alpha \in\mathcal{A}$:

\begin{enumerate}
\item [$(C_{1})$] The kernel $k$ is measurable and the function $k(\cdot,s)$ is uniformly continuous with respect to $s$, that is, for every $\varepsilon >0$ there exists $\delta >0$ such that $|t_1-t_2| < \delta$ implies $|k(t_1,s)-k(t_2,s)| < \varepsilon$ for all $s \in I$;
\item [$(C_{2})$]
 $\psi_\alpha (s):=\alpha (k(\cdot,s))\ge0$ for a.\,a. (almost all) $s\in I$;
\item [ $(C_{3})$] the functions $g$, $k(t,\cdot)g$ and $\psi_\alpha \, g$ are integrable and $g(t) \geq 0$ for a.\,a. $t \in I$;
\item  [ $(C_{4})$] the nonlinearity $f:I\times \mathbb{R}
 \to [0,+\infty)$ satisfies  $\operatorname{L^\infty}$-Carath\'{e}odory
conditions, that is, $f(\cdot,u)$ is measurable for each fixed
$u$ and $f(t,\cdot)$ is continuous for a.\,a. $t\in I$, and, for each $r>0$, there exists $\phi_{r} \in
\operatorname{L^\infty}(I)$ such that $f(t,u)\le \phi_{r}(t)$ for all $u\in
[-r,r]$ and a.\,a. $t\in I$;
\item  [$(C_{5})$] \[\alpha (Tu)\ge\int_0^1\psi_\alpha (s)g(s)f(s,u(s))\operatorname{d} s \ \mbox{ for all $u\in\ K_\alpha $;}\]
\item  [$(C_{6})$]
there exist two continuous functionals $\beta,\gamma:\mathcal{C}(I)\to\mathbb{R}$ satisfying that, for $u,v\in K_\alpha $ and $\lambda\in[0,+\infty)$,

\[\beta(u+v)\le\beta(u)+\beta(v),\ \beta(\lambda\,u)=\lambda\beta(u),\ \beta(Tu)\le\int_0^1\psi_\beta(s)g(s)f(s,u(s))\operatorname{d} s,\]
\[\gamma(u+v)\ge\gamma(u)+\gamma(v),\ \gamma(\lambda\,u)\ge\lambda\gamma(u),\ \gamma(Tu)\ge\int_0^1\psi_\gamma(s) g(s)f(s,u(s))\operatorname{d} s,\]
\[\psi_\beta,\psi_\gamma\in L^1(I) \ \mbox{ and }\int_0^1\psi_\beta(s)g(s)\operatorname{d} s,\int_0^1\psi_\gamma(s)g(s)\operatorname{d} s>0;\]
\item  [$(C_{7})$] there exists $e\in K_\alpha \backslash\{0\}$ such that $\gamma(e)\ge0$;
\item  [$(C_{8})$] for every $\rho>0$ there exist $b(\rho),c(\rho)>0$ such that $\beta(u)\le b(\rho)$ for every $u\in K_\alpha $ satisfying $\gamma(u)\le \rho$ and $\gamma(u)\le c(\rho)$ for every $u\in K_\alpha $ satisfying $\beta(u)\le \rho$.
\end{enumerate}

\begin{rem}
Notice that if the kernel $k(t,s)$ is a.\,e. differentiable with respect to $t$ and $\partial k/\partial t$ is uniformly bounded with respect to $s$ then condition $(C_1)$ is satisfied. This formulation is useful in applications when $k$ corresponds to a Green's function.
\end{rem}
%\textbf{Note: Can we derive $(C_5)$ --and the integral part of $(C_6)$-- from \eqref{a1}-\eqref{a2}? (this is straightforward for the Riemann integral).}

\begin{thm}\label{thmk}
Assume hypotheses $(C_{1})$--$(C_{5})$. Then $T$ is continuous, compact and maps $K_\alpha $ to $K_\alpha $.
\end{thm}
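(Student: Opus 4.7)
The plan is to verify the three claims (that $T$ sends $\mathcal{C}(I)$ into itself and $K_\alpha$ into $K_\alpha$, that $T$ is compact on bounded sets, and that $T$ is continuous) in sequence, using $(C_1)$, $(C_3)$, $(C_4)$ to control the kernel and nonlinearity, and using $(C_2)$, $(C_5)$ only for the invariance of $K_\alpha$.

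\textbf{Step 1 (well-definedness and $T\colon \mathcal{C}(I) \to \mathcal{C}(I)$).} For $u \in \mathcal{C}(I)$ with $\|u\| \le r$, $(C_4)$ gives $|f(s,u(s))| \le \|\phi_r\|_\infty$ a.e., so by $(C_3)$ the integrand $k(t,s)g(s)f(s,u(s))$ is integrable and $Tu(t)$ makes sense. For continuity in $t$, fix $\varepsilon > 0$ and apply $(C_1)$ to obtain $\delta$ such that $|t_1-t_2|<\delta$ yields $|k(t_1,s)-k(t_2,s)|<\varepsilon$ for all $s\in I$; then
\[
|Tu(t_1)-Tu(t_2)| \le \varepsilon \,\|\phi_r\|_\infty \int_0^1 g(s)\,ds.
\]
So $Tu$ is (uniformly) continuous.

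\textbf{Step 2 (invariance of $K_\alpha$).} For $u \in K_\alpha$, hypothesis $(C_5)$ gives $\alpha(Tu) \ge \int_0^1 \psi_\alpha(s) g(s) f(s,u(s))\,ds$. Each of $\psi_\alpha$, $g$, $f(s,u(s))$ is nonnegative by $(C_2)$, $(C_3)$, $(C_4)$, so $\alpha(Tu) \ge 0$, i.e., $Tu \in K_\alpha$.

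\textbf{Step 3 (compactness).} I would verify the hypotheses of the Arzelà--Ascoli theorem on any bounded set $B \subset \mathcal{C}(I)$ with $\|u\| \le r$ for $u\in B$. Equicontinuity of $T(B)$ is immediate from the estimate in Step 1, uniform in $u \in B$. For uniform boundedness, applying $(C_1)$ with $\varepsilon=1$ yields a $\delta>0$ and a finite $\delta$-net $\{t_1,\dots,t_N\}\subset I$ such that for every $t\in I$ there is $t_i$ with $|k(t,s)| \le |k(t_i,s)|+1$ for all $s$. Hence
\[
|k(t,s)|g(s) \le M(s) := \max_{1\le i\le N}|k(t_i,s)|g(s) + g(s),
\]
and $M \in L^1(I)$ by $(C_3)$. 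It follows that $|Tu(t)| \le \|\phi_r\|_\infty \int_0^1 M(s)\,ds$ uniformly in $u\in B$ and $t\in I$. Arzelà--Ascoli then gives relative compactness of $T(B)$.

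\textbf{Step 4 (continuity).} Let $u_n \to u$ in $\mathcal{C}(I)$. The sequence is bounded, say by $r$, so $(C_4)$ gives $f(s,u_n(s)) \to f(s,u(s))$ a.e.\ with the common bound $2\|\phi_r\|_\infty$. Using the majorant $M$ from Step 3,
\[
|Tu_n(t) - Tu(t)| \le \int_0^1 M(s)\,|f(s,u_n(s))-f(s,u(s))|\,ds,
\]
and dominated convergence applied to this integral (the bound being $2\|\phi_r\|_\infty M(s) \in L^1$) shows the right-hand side tends to $0$. Since the bound is independent of $t$, convergence is uniform and $T u_n \to Tu$ in $\mathcal{C}(I)$.

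The only mildly delicate point is the construction of a $t$-uniform $L^1$ majorant $M$ for $|k(t,\cdot)|g$: this is where $(C_1)$ (uniform-in-$s$ continuity of $k(\cdot,s)$) must be paired with $(C_3)$ (integrability of $k(t,\cdot)g$ for each $t$) via a finite-net argument. The rest is standard Carathéodory/Hammerstein machinery.
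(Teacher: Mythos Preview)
Your proof is correct and follows essentially the same route as the paper (Carath\'eodory bounds, dominated convergence, Arzel\`a--Ascoli, and $(C_2)$, $(C_5)$ for invariance of $K_\alpha$). Your explicit construction of the $t$-uniform $L^1$ majorant $M$ via a finite $\delta$-net is in fact more careful than the paper's treatment, which simply invokes ``the continuity of $k(\cdot,s)$'' to pass from the pointwise bound $\|\phi_R\|\int_0^1|k(t,s)g(s)|\,ds$ to uniform boundedness and leaves the uniform-in-$t$ convergence in the continuity step implicit.
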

\begin{proof}
Continuity and compactness are derived from standard arguments involving Lebesgue's Do\-mi\-na\-ted Convergence Theorem, but we include it for completeness.

\emph{Continuity:} Let $\{u_n\}_{n \in \mathbb{N}}$ be a sequence which converges to $u$ in $\mathcal{C}(I)$. In particular, $\{u_n\}_{n \in \mathbb{N}}$ is bounded, that is, there exists $r >0$ such that $||u_n|| \le r$ for all $n \in \mathbb{N}.$ Moreover, we have by virtue of  $(C_4)$ that $f(s,u_n(s)) \to f(s,u(s))$ for a.\,e. $s \in I$. Then, conditions $(C_3)-(C_4)$ imply now that
$$
|Tu_n(t)| \le \|\phi_r\| \int_0^1 |k(t,s) \, g(s)| \, \operatorname{d} s \ \mbox{ for all $t \in I$},$$
and we obtain, by application of Lebesgue's Dominated Convergence Theorem that $Tu_n \to Tu$, in $\mathcal{C}(I)$. Hence, operator $T$ is continuous.

\emph{Compactness:} Let $B \subset K_{\alpha }$ a bounded set, that is, $||u|| \le R$ for all $u \in B$ and some $R>0$. Then similar arguments as above show that
$$
|Tu(t)| \le \|\phi_R\| \int_0^1 |k(t,s) \, g(s)| \, \operatorname{d} s \ \mbox{ for all $t \in I$ and all $u \in B$}.$$
Therefore, the continuity of $k(\cdot,s)$ implies that the set $T(B)$ is totally bounded. On the other hand, given $t,s \in I$, we have
$$
|Tu(t)-Tu(s)| \le  \int_0^1 |k(t,r)-k(s,r)| \, g(r) \, \phi_R(r) \, \operatorname{d} r,$$
Hence, by virtue of $(C_1)$, $(C_3)$ and $(C_4)$,  $T(B)$ is equicontinuous. In conclusion, we derive, by application of Ascoli--Arzela's Theorem, that $T(B)$ is relatively compact in $\mathcal{C}(I)$ and derive that $T$ is a compact operator.

Finally, we obtain from conditions $(C_2)$ and $(C_5)$ that
\[\alpha (Tu)\ge\int_0^1\psi_\alpha (s)g(s)f(s,u(s))\operatorname{d} s\ge0 \mbox{ for all $u \in K_{\alpha }$.}\]
Thus, $Tu\in K_\alpha $.
\end{proof}

In the sequel, we give a condition that ensures that, for a suitable $\rho>0$, the index is $1$ or $0$ in certain open subsets of $K_{\alpha }$. In order to see this, we define the sets \begin{align*}K_\alpha ^{\beta,\,\rho}:= & \beta^{-1}([0,\rho))\cap K_\alpha =\{u\in\mathcal{C}(I)\ :\ \alpha (u)\ge0,\ 0\le\beta(u)<\rho\},\\
K_\alpha ^{\gamma,\,\rho}:= & \gamma^{-1}([0,\rho))\cap K_\alpha =\{u\in\mathcal{C}(I)\ :\ \alpha (u)\ge0,\ 0\le\gamma(u)<\rho\}.\end{align*}
We can define now two functions $b,c:\mathbb{R}^+\to\mathbb{R}^+$ in the conditions of $(C_8)$ in the following way:
\[b(\rho):=\sup\{\beta(u)\ :\ u\in K_\alpha ,\ \gamma(u)<\rho\},\quad c(\rho):=\sup\{\gamma(u)\ :\ u\in K_\alpha ,\ \beta(u)<\rho\}.\]
With these definitions, $K_\alpha ^{\beta,\,\rho}\subset K_\alpha ^{\gamma,\,c(\rho)}$ and $ K_\alpha ^{\gamma,\,\rho}\subset K_\alpha ^{\beta,\,b(\rho)}$.
\begin{lem}
\label{ind1b} Assume that
\begin{enumerate}
\item[$(\mathrm{I}_{\protect\rho }^{1})$] \label{EqB} there exists $\rho> 0$ such that
\begin{displaymath}
 f^{\rho}\cdot\int_0^1\psi_\beta(s)g(s)\operatorname{d} s
 <1,
\end{displaymath}
where
\begin{displaymath}
  f^\rho=\sup \left\{\frac{f(t,u(t))}{\rho }:\;t\in
I,\ u\in K_\alpha ,\ \beta(u)=\rho\right\}.\end{displaymath} \end{enumerate}{}
Then the fixed point index $i_{K}(T,K_\alpha ^{\beta,\rho})$ is equal to $1$.
\end{lem}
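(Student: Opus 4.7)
The plan is to apply part (2) of Lemma \ref{lemind}, which requires showing that $\mu u \neq Tu$ for every $u \in \partial K_\alpha^{\beta,\rho}$ and every $\mu \geq 1$. Continuity and compactness of $T$, as well as the invariance $T(K_\alpha) \subset K_\alpha$, are already provided by Theorem \ref{thmk}, so only this ``no-escape'' condition on the boundary needs new work.

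Before the estimate I would dispatch the geometric prerequisites of Lemma \ref{lemind}. The set $K_\alpha^{\beta,\rho}$ is open in $K_\alpha$ because $\beta$ is continuous; it contains $0$ since the positive homogeneity in $(C_6)$ applied with $\lambda = 0$ gives $\beta(0) = 0$; it is bounded via the inclusion $K_\alpha^{\beta,\rho}\subset K_\alpha^{\gamma,c(\rho)}$ noted immediately before the lemma, together with $(C_8)$; and $\overline{K_\alpha^{\beta,\rho}} \neq K_\alpha$ because positive homogeneity of $\beta$ produces elements of $K_\alpha$ of arbitrarily large $\beta$-value (e.g.\ by rescaling an element with positive $\beta$, which exists otherwise $(\mathrm{I}_\rho^1)$ would be vacuous). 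In particular the boundary relative to $K_\alpha$ is $\partial K_\alpha^{\beta,\rho} = \{u \in K_\alpha : \beta(u) = \rho\}$.

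The core step is short. Suppose for contradiction that $\mu u = Tu$ for some $u \in \partial K_\alpha^{\beta,\rho}$ (so $\beta(u) = \rho$) and some $\mu \geq 1$. Applying $\beta$ and using the positive homogeneity and the integral estimate of $(C_6)$ yields
\[ \mu \rho \;=\; \beta(\mu u) \;=\; \beta(Tu) \;\le\; \int_0^1 \psi_\beta(s)\,g(s)\,f(s,u(s))\operatorname{d} s. \]
Since $u \in K_\alpha$ with $\beta(u) = \rho$, the very definition of $f^\rho$ gives $f(s,u(s)) \le f^\rho\,\rho$ for every $s \in I$; combining with the non-negativity of $\psi_\beta g$ and hypothesis $(\mathrm{I}_\rho^1)$ yields
\[ \mu \rho \;\le\; f^\rho\,\rho \int_0^1 \psi_\beta(s)\,g(s)\operatorname{d} s \;<\; \rho, \]
so $\mu < 1$, contradicting $\mu \geq 1$. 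Part (2) of Lemma \ref{lemind} then delivers $i_K(T, K_\alpha^{\beta,\rho}) = 1$. The main (and rather modest) obstacle is the geometric bookkeeping at the start: verifying that $K_\alpha^{\beta,\rho}$ is indeed an admissible domain for the fixed point index, since the sign/size properties of $\beta$ and $\psi_\beta g$ are not completely transparent from $(C_6)$ alone and must be traced back through $(C_2)$, $(C_3)$ and $(C_8)$.
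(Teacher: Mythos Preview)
Your core contradiction argument is exactly the paper's: assume $\mu u=Tu$ with $\beta(u)=\rho$, apply $\beta$ using the homogeneity and integral upper bound from $(C_6)$, bound $f$ by $\rho f^\rho$, and reach $\mu\rho<\rho$. The paper omits all of the geometric preliminaries you add and simply takes $\partial K_\alpha^{\beta,\rho}=\beta^{-1}(\rho)\cap K_\alpha$ for granted; one small caveat is that your boundedness justification via $(C_8)$ does not actually produce a \emph{norm} bound on $K_\alpha^{\beta,\rho}$---condition $(C_8)$ only controls $\beta$ and $\gamma$ in terms of each other---so that point remains just as implicit in your write-up as in the paper's.
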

\begin{proof}
We show that $\mu u \neq Tu$ for every $u \in \partial K_\alpha ^{\beta,\rho}=\beta^{-1}(\rho)\cap K_\alpha $
and for every $\mu \geq 1$. %; this ensures that the index is 1 on $K_{\rho }$.
In fact, if this does not happen there exist $\mu \geq 1$ and $u\in
\partial K_\alpha ^{\beta,\rho}$ such that $\mu u=Tu$,
that is
\begin{displaymath}\mu u(t)= \int_{0}^{1}
k(t,s)g(s)f(s,u(s))\operatorname{d} s.\end{displaymath}
Taking $\beta$ on both sides,
\begin{align*}
\mu\beta(u)=\mu\rho\leq
\int_0^1\psi_\beta(s)g(s)f(s,u(s))\operatorname{d} s
 \leq\rho f^{\rho}\cdot\int_0^1\psi_\beta(s)g(s)\operatorname{d} s
 <\rho.
\end{align*}

This
contradicts the fact that $\mu \geq 1$ and proves the result.
\end{proof}

\begin{lem}
\label{idx0b1} Assume that

\begin{enumerate}
\item[$(\mathrm{I}_{\protect\rho }^{0})$] there exist $\rho >0$ such that
such that
\begin{displaymath}
f_\rho\cdot\int_0^1\psi_\gamma(s)g(s)\operatorname{d} s
 >1,
\end{displaymath}
where
\begin{displaymath}
f_\rho =\inf \left\{\frac{f(t,u(t))}{\rho }:\;t\in I,\ u\in K_\alpha ,\ \gamma(u)=\rho\right\}.\end{displaymath}
\end{enumerate}

Then $i_{K}(T,K_\alpha ^{\gamma,\rho})=0$.
\end{lem}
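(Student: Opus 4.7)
The plan is to apply part (1) of Lemma \ref{lemind}, namely to exhibit an element $e \in K_\alpha\setminus\{0\}$ such that $u \neq Tu + \lambda e$ for every $u \in \partial K_\alpha^{\gamma,\rho}$ and every $\lambda > 0$. The natural candidate for $e$ is the one supplied by hypothesis $(C_7)$, which crucially also satisfies $\gamma(e) \ge 0$. Before arguing, I would briefly verify the structural prerequisites of Lemma \ref{lemind}: that $0 \in K_\alpha^{\gamma,\rho}$ (which follows from $(C_6)$ applied with $\lambda = 0$ and $\lambda = 2$ to conclude $\gamma(0)=0 < \rho$), and that $\partial K_\alpha^{\gamma,\rho} \subset \gamma^{-1}(\rho) \cap K_\alpha$ by continuity of $\gamma$. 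Note that $T$ is continuous, compact and sends $K_\alpha$ into itself by Theorem \ref{thmk}.

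The core of the proof is then the usual contradiction. Suppose there exist $\lambda > 0$ and $u \in \partial K_\alpha^{\gamma,\rho}$, i.e.\ $\gamma(u)=\rho$, with $u = Tu + \lambda e$. Since $Tu \in K_\alpha$ and $\lambda e \in K_\alpha$, the superadditivity and positive homogeneity of $\gamma$ on $K_\alpha$ from $(C_6)$ give
\[
\rho = \gamma(u) = \gamma(Tu + \lambda e) \ge \gamma(Tu) + \lambda\,\gamma(e) \ge \gamma(Tu),
\]
where the last inequality uses $\gamma(e)\ge 0$ and $\lambda>0$.

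Next I would invoke the Hammerstein-type lower bound from $(C_6)$, namely $\gamma(Tu)\ge \int_0^1 \psi_\gamma(s)g(s)f(s,u(s))\,\mathrm{d}s$. Since $\gamma(u)=\rho$, the definition of $f_\rho$ provides the pointwise lower bound $f(s,u(s)) \ge \rho\, f_\rho$ for a.\,a.\ $s \in I$, and $(C_3)$ together with $(C_6)$ ensures $\psi_\gamma g \ge 0$ and has positive integral. Combining these with hypothesis $(\mathrm{I}_\rho^0)$ yields
\[
\rho \ge \gamma(Tu) \ge \rho\, f_\rho \int_0^1 \psi_\gamma(s)g(s)\,\mathrm{d}s > \rho,
\]
which is the desired contradiction, and Lemma \ref{lemind}(1) then delivers $i_K(T,K_\alpha^{\gamma,\rho})=0$.

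The only delicate step is the chain of inequalities after applying $\gamma$: one must check simultaneously that $\gamma(e)\ge 0$ (so dropping the $\lambda\gamma(e)$ term is legal), that the arguments of $\gamma$ all lie in $K_\alpha$ (so the superadditivity from $(C_6)$ applies — $(C_6)$ assumes $u,v\in K_\alpha$), and that $\psi_\gamma(s)g(s) \ge 0$ almost everywhere so that the pointwise bound $f(s,u(s)) \ge \rho f_\rho$ can be inserted without reversing the inequality. Assuming $\psi_\gamma \ge 0$ is not listed explicitly among $(C_1)$–$(C_8)$ but is implicit (analogously to $(C_2)$); if one wishes to avoid this, one can instead use $f \ge 0$ from $(C_4)$ to justify positivity in the integral. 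Everything else is a direct bookkeeping application of the hypotheses.
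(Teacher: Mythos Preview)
Your proof is correct and follows essentially the same route as the paper: pick $e$ from $(C_7)$, suppose $u=Tu+\lambda e$ on $\partial K_\alpha^{\gamma,\rho}$, apply $\gamma$, use superadditivity together with $\gamma(e)\ge 0$ and the integral lower bound for $\gamma(Tu)$, then invoke $(\mathrm{I}_\rho^0)$ to obtain $\rho>\rho$. The only cosmetic difference is that the paper runs the contradiction for all $\lambda\ge 0$, thereby simultaneously verifying the fixed-point-free-on-the-boundary prerequisite of Lemma~\ref{lemind}; your argument works verbatim for $\lambda=0$ as well, so this is just a matter of stating it.
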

\begin{proof} Take $e$ as in $(C_7)$. Now we show that
$u\ne Tu+\lambda e$ for every $u\in \partial K_\alpha ^{\gamma,\rho}=\gamma^{-1}(\rho)\cap K_\alpha $ and $\lambda \geq 0.$ Assume otherwise that there exist $u\in \partial K_\alpha ^{\gamma,\rho}$ and $\lambda\geq 0$
such that $u=Tu+\lambda e$. Then we have
\begin{displaymath}
u(t)=\int_0^1
k(t,s)g(s)f(s,u(s))\operatorname{d} s+\lambda\,e.\end{displaymath}
Therefore, applying $\gamma$ on both sides,
\begin{align*}
\rho=\gamma(u)&\ge\int_0^1
\psi_\gamma(s)g(s)f(s,u(s))\operatorname{d} s+ \lambda\gamma(e) \ge
\rho f_\rho\int_0^1\psi_\gamma(s)g(s)\operatorname{d} s>\rho,
\end{align*}
which is a contradiction.
\end{proof}

Now we can combine the above Lemmas to prove the following Theorem. The proof of such is straightforward from the properties of the fixed point index stated in Lemma \ref{lemind}.
\begin{thm}
\label{thmmsol1} The integral equation \eqref{eqhamm} has at least one non-zero solution
in $K$ if either of the following conditions hold.

\begin{enumerate}

\item[$(S_{1})$] There exist $\rho _{1},\rho _{2}\in (0,+\infty )$ with $\rho_2
>b(\rho_1)$ such that $(\mathrm{I}_{\rho _{1}}^{0})$ and $(\mathrm{I}_{\rho _{2}}^{1})$ hold.

\item[$(S_{2})$] There exist $\rho _{1},\rho _{2}\in (0,+\infty )$ with $\rho
_{2}>c(\rho _{1})$ such that $(\mathrm{I}_{\rho _{1}}^{1})$ and $(\mathrm{I}%
_{\rho _{2}}^{0})$ hold.
\end{enumerate}
The integral equation \eqref{eqhamm} has at least two non-zero solutions in $K$ if one of
the fo\-llo\-wing conditions hold.

\begin{enumerate}

\item[$(S_{3})$] There exist $\rho _{1},\rho _{2},\rho _{3}\in (0,+\infty )$
with $\rho_2>b(\rho_1)$ and $\rho_3>c(\rho_2)$ such that $(\mathrm{I}_{\rho
_{1}}^{0}),$ $(
\mathrm{I}_{\rho _{2}}^{1})$ $\text{and}\;\;(\mathrm{I}_{\rho _{3}}^{0})$
hold.

\item[$(S_{4})$] There exist $\rho _{1},\rho _{2},\rho _{3}\in (0,+\infty )$
with $\rho_2>c(\rho_1)$ and $\rho_3>b(\rho_2)$ such that $(\mathrm{I}%
_{\rho _{1}}^{1}),\;\;(\mathrm{I}_{\rho _{2}}^{0})$ $\text{and}\;\;(\mathrm{I%
}_{\rho _{3}}^{1})$ hold.
\end{enumerate}

\end{thm}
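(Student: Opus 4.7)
The plan is to run the standard additivity-of-fixed-point-index argument on nested open sets of the form $K_\alpha^{\beta,\rho}$ and $K_\alpha^{\gamma,\rho}$, whose indices have already been computed in Lemmas \ref{ind1b} and \ref{idx0b1}. Theorem \ref{thmk} supplies what is needed for those index calculations to be legitimate: under $(C_1)$--$(C_5)$ the operator $T\colon K_\alpha\to K_\alpha$ is well-defined, continuous and compact. The sets $K_\alpha^{\beta,\rho}$ and $K_\alpha^{\gamma,\rho}$ are open in the relative topology of $K_\alpha$ because $\beta,\gamma$ are continuous, they are bounded neighbourhoods of the origin (the homogeneity properties in $(C_6)$ force $\beta(0)=\gamma(0)=0$), and hence they are admissible domains for the index tool stated in Lemma \ref{lemind}.

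For case $(S_1)$, Lemma \ref{idx0b1} applied at $\rho_1$ gives $i_{K_\alpha}(T,K_\alpha^{\gamma,\rho_1})=0$, while Lemma \ref{ind1b} applied at $\rho_2$ gives $i_{K_\alpha}(T,K_\alpha^{\beta,\rho_2})=1$. The hypothesis $\rho_2>b(\rho_1)$ together with the definition of $b$ yields the closure containment $\overline{K_\alpha^{\gamma,\rho_1}}\subset K_\alpha^{\beta,\rho_2}$, so property (4) of Lemma \ref{lemind} produces a fixed point of $T$ in $K_\alpha^{\beta,\rho_2}\setminus\overline{K_\alpha^{\gamma,\rho_1}}$; this fixed point is nonzero because $0\in K_\alpha^{\gamma,\rho_1}$. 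Case $(S_2)$ is the mirror statement obtained by swapping the roles of $\beta,\gamma$ and of $b,c$. For the multiplicity results, I would iterate the same mechanism: in $(S_3)$, the two gap conditions $\rho_2>b(\rho_1)$ and $\rho_3>c(\rho_2)$ produce the chain of closure inclusions $\overline{K_\alpha^{\gamma,\rho_1}}\subset K_\alpha^{\beta,\rho_2}$ and $\overline{K_\alpha^{\beta,\rho_2}}\subset K_\alpha^{\gamma,\rho_3}$, with indices $0$, $1$, $0$ respectively; two applications of property (4) give a fixed point in each of the two disjoint annular regions $K_\alpha^{\beta,\rho_2}\setminus\overline{K_\alpha^{\gamma,\rho_1}}$ and $K_\alpha^{\gamma,\rho_3}\setminus\overline{K_\alpha^{\beta,\rho_2}}$, and case $(S_4)$ is again symmetric.

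The only point that requires genuine care is the verification of the closure inclusions, since $b(\rho_1)$ is defined as a supremum over the strict condition $\gamma(u)<\rho_1$, whereas it must be used to control $\beta$ on the closure $\overline{K_\alpha^{\gamma,\rho_1}}$. The argument is routine but must be written out: for any $u$ in this closure, pick $u_n\to u$ with $\gamma(u_n)<\rho_1$, so $\beta(u_n)\le b(\rho_1)$, and by continuity of $\beta$ conclude $\beta(u)\le b(\rho_1)<\rho_2$, placing $u$ in the open set $K_\alpha^{\beta,\rho_2}$. One should also observe the harmless condition $\overline{K_\alpha^{\beta,\rho}}\ne K_\alpha$ (and likewise for $\gamma$) required by Lemma \ref{lemind}, which holds whenever the functionals are unbounded on $K_\alpha$, the setting of interest. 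With these technical checks in place, the theorem is an immediate composite of Lemmas \ref{ind1b}, \ref{idx0b1} and property (4) of Lemma \ref{lemind}, as the authors' remark suggests.
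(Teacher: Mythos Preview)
Your proposal is correct and follows exactly the approach the paper has in mind: the paper's own proof is just the single sentence that the result is ``straightforward from the properties of the fixed point index stated in Lemma~\ref{lemind}'', combining Lemmas~\ref{ind1b} and~\ref{idx0b1}. Your write-up is in fact more careful than the paper's, since you spell out the closure inclusion $\overline{K_\alpha^{\gamma,\rho_1}}\subset K_\alpha^{\beta,\rho_2}$ via the continuity of $\beta$ and address the admissibility conditions from Lemma~\ref{lemind} that the paper leaves implicit.
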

\begin{rem}
The list of conditions can be extended to obtain more multiplicity results (cf. Lan~\cite{Lan}).
\end{rem}
\section{An example}
We finish this paper with an example to illustrate the applications of Theorem~\ref{thmmsol1}. \\

\emph{Consider the problem
\begin{equation}\label{pronorm} -u''(t)=f(t,u(t)):=\frac{4}{|u(t)|+4},\ t\in I, \quad u(0)=u(1)=0.\end{equation}
Is there any concave solution of problem \eqref{pronorm} satisfying that $\displaystyle{\int_0^1u(s)\operatorname{d} s\ge \frac{1}{20}}$ and $\|u\|_2\le \dfrac{1}{2}$?}\par
To answer this question we will work on a cone $K_\alpha $ of the type given in Example~\ref{conex}, where
\[\alpha (u):=\min\left\{\inf_{t,s\in I}\left[u\left(\frac{t+s}{2}\right)-\frac{u(t)+u(s)}{2}\right],\,u(0),\,-u(0),\,u(1),\,-u(1)\right\}.\]
$K_\alpha $ is precisely the cone of continuous concave functions that vanish at $0$ and $1$. Moreover, this cone is contained in the cone of nonnegative continuous functions.
Observe that we can rewrite problem \eqref{pronorm} in terms of a fixed--point problem for the operator
\begin{equation*}
%\label{eqpullint}
 u(t)= \int_0^1k(t,s)f(s,u(s))\operatorname{d} s,\end{equation*}
where
\begin{displaymath}k(t,s):=\begin{cases} s(1-t), & 0\le s\le t\le 1,\\
t(1-s), & 0\le t\le s\le 1.\end{cases}\end{displaymath}

 Notice that $k$ is continuous, non-negative, $k(0,s)=k(1,s)=0$ for every $s\in I$ and the function $k(\cdot,s)$ is concave for every $s\in I$ since it is piecewise defined as two line segments, one increasing in the first part of the interval and the other decreasing in the second part. Moreover, $k(\cdot,s)$ is a.e. differentiable with uniformly bounded derivative. Hence, conditions $(C_1)$ and $(C_2)$ are satisfied.\par
 In this case $g\equiv 1$ and $\psi_\alpha \equiv 0$, so $(C_3)$ is also satisfied. Furthermore, $(C_4)$ is satisfied by the definition of $f$ and $(C_5)$ holds since $\psi_\alpha \equiv 0$ and $\alpha$ is non-negative in $K_\alpha$.\par
On the other hand, if we take $\beta(u)=\|u\|_2$ and $\gamma(u)=\displaystyle{\int_0^1u(s)\operatorname{d} s}$ we have that
\[\psi_\beta(s)=\frac{1}{\sqrt{3}} \, s(1-s),\ \psi_\gamma(s)=\frac{1}{2} \, s(1-s) \ \mbox{ for all }s\in I.\]
Hence,
\[\int_0^1\psi_\beta(s)g(s)\operatorname{d} s=\frac{1}{6\sqrt{3}},\ \int_0^1\psi_\gamma(s)g(s)\operatorname{d} s=\frac{1}{12} \ \mbox{ for all } s\in I.\]
Thus, $\beta$ and $\gamma$ satisfy $(C_6)$. Now, $\psi_\gamma(s)\ge 0$ for every $s\in I$, so condition $(C_7)$ is also satisfied. Observe that, since the functions in $K_\alpha $ are nonnegative, $\gamma(u)=\|u\|_1$ for $u\in K_\alpha $.\par
Now, in order to check that $(C_8)$ also holds, we construct the functions $b$ and $c$ using some inequalities comprising the norms  $\|\cdot\|_1$,  $\|\cdot\|_2$ and $\|\cdot\|_\infty$.\par
First, it is a known fact that $\|u\|_1\le\|u\|_2\le\|u\|_\infty$ for functions $u\in\mathcal{C}(I)$, so we can choose $c(\rho)=\rho$. Now, for $u\in K_\alpha $, take $t_u:=\inf\{t\in I\ :\ u(t)=\|u\|_\infty\}$ and define
\[\widetilde u(t):=\begin{dcases} \frac{\|u\|_\infty}{t_u}t, & t\in[0,t_u],\\ \frac{\|u\|_\infty}{1-t_u}(1-t), & t\in[0,t_u].\end{dcases}\]
We have that $u,\widetilde u \in K_\alpha $ and $\widetilde u\le u$. Therefore,
\[\|u\|_1\ge\|\widetilde u\|_1=\frac{1}{2}t_u\|u\|_\infty+\frac{1}{2}(1-t_u)\|u\|_\infty=\frac{1}{2}\|u\|_\infty\ge \frac{1}{2}\|u\|_2.\]
Hence, it is enough to choose $b(\rho)=2\rho$ to guarantee that $(C_8)$ is satisfied.\par

Finally, if we take $\rho_1=\dfrac{1}{20}$ and $\rho_2=\dfrac{1}{2}$ then we have that $\rho_2>b(\rho_1)$. Observe that
\[f_{\rho_1}\ge\frac{f(t,u(t))}{\rho_1}=\frac{80}{|u(t)|+4}\ge\frac{80}{\|u\|_\infty+4}\ge \frac{80}{2\|u\|_1+4}= \frac{80}{2/20+4}=\frac{800}{41},\]
for $t\in I$ and $\gamma(u)=\|u\|_1=\rho_1$.
Hence,
\[f_{\rho_1}\int_0^1\psi_\gamma(s)g(s)\operatorname{d} s\ge\frac{800}{41}\frac{1}{12}=\frac{200}{123}>1.\]
Therefore, condition $(I_{\rho_1}^0)$ holds.\par On the other hand,
\[f^{\rho_2}\le\frac{f(t,u(t))}{\rho_2}=\frac{8}{|u(t)|+4}\ge\frac{8}{4}=2,\]
for $t\in I$ and $\gamma(u)=\|u\|_2=\rho_2$ Thus,
\[f^{\rho_2}\int_0^1\psi_\beta(s)g(s)\operatorname{d} s\ge2\frac{1}{6\sqrt{3}}=\frac{1}{3\sqrt{3}}<1.\]
Therefore, condition $(I_{\rho_2}^1)$ is satisfied. This means that condition $(S_1)$ in Theorem~\ref{thmmsol1} holds and, hence, there exists a solution $u$ of problem \eqref{pronorm} in $K_\alpha ^{\beta,\,\rho_2}\backslash K_\alpha ^{\gamma,\,\rho_1}$. That is, such a solution is concave, nonnegative and satisfies $\displaystyle{\int_0^1u(s)\operatorname{d} s\ge\frac{1}{20}}$ and $\|u\|_2\le \dfrac{1}{2}$.

\newpage

\end{document}